\newcommand{\ol}{\overline}
\newcommand{\veps}{\varepsilon}
\newcommand{\odin}[1]{{\mbox {\bf I}}_{\{#1\}}}
\newcommand{\fracd}[2]{\frac {\displaystyle #1}{\displaystyle #2 }}
\newcommand{\nn}{{\mathbb N}}
\newcommand{\rr}{{\mathbb R}}
\newcommand{\zz}{{\mathbb Z}}
\newcommand{\cald}{{\mathcal D}}
\newcommand{\calf}{{\mathcal F}}
\newcommand{\ms}{{\mathfrak S}}
\newcommand{\calr}{{\mathcal R}}
\newcommand{\bo}{{\bf 0}}
\newcommand{\ba}{{\bf a}}
\newcommand{\bx}{{\bf x}}
\newcommand{\by}{{\bf y}}
\newcommand{\bQ}{{\bf Q}}
\newcommand{\bX}{{\bf X}}
\newcommand{\bY}{{\bf Y}}
\newcommand{\beq}{\begin{eqnarray*}}
\newcommand{\feq}{\end{eqnarray*}}
\newcommand{\beqn}{\begin{eqnarray}}
\newcommand{\feqn}{\end{eqnarray}}
\newtheorem{theorem}{Theorem}
\makeatletter \@addtoreset{theorem}{section}\makeatother
\newtheorem{definition}[theorem]{Definition}
\newtheorem{lemma}[theorem]{Lemma}
\newtheorem{assume}[theorem]{Assumption}
\newtheorem*{theorem*}{Theorem}
\begin{document}

\begin{frontmatter}
\title{Multivariate linear recursions with Markov-dependent coefficients\tnoteref{label1}}
\tnotetext[label1]{Submitted September 21, 2009; Revised April 11, 2010}
\author{Diana Hay\fnref{a1}}
\fntext[a1]{Department of Mathematics, Iowa State University, Ames, IA 50011, USA}
\author{Reza Rastegar\fnref{a1}}
\author{Alexander Roitershtein\corref{cor1}\fnref{a1}}
\cortext[cor1]{Corresponding author. E-mail: roiterst@iastate.edu}
\begin{abstract}
We study a linear recursion with random Markov-dependent coefficients.
In a ``regular variation in, regular variation out" setup we show that its stationary solution
has a multivariate regularly varying distribution. This extends results previously established for i.i.d. coefficients.
\end{abstract}
\begin{keyword}
random vector equations
\sep
multivariate random recursions
\sep
stochastic difference equation
\sep
tail asymptotic
\sep
heavy tails
\sep
multivariate regular variation.
\MSC[2000] Primary: 60H25 \sep 60K15 \sep Secondary: 60J10 \sep 60J20.
\end{keyword}
\end{frontmatter}
\section{Introduction and statement of results}
\label{intro}
Let $\bQ_n$ be random $d$-vectors, $M_n$ random $d\times d$ matrices, and consider the recursion
\beqn
\label{leqn}
\bX_n=\bQ_n+M_n \bX_{n-1},\qquad \bX_n\in\rr^d,\,n\in\zz.
\feqn
This equation has been used to model the progression of real-world systems in discrete time, for example, in queuing theory \cite{brandt1} and financial models \cite{engle, mikosch}. See for instance \cite{review, embre-goldie, samorachev, vervaat} and
references therein for more examples.
\par
Let $I$ denote the $d\times d$ identity matrix and let $\Pi_n=M_0M_{-1}\cdots M_{-n}$
for $n\geq 0.$ It is well known (see for instance \cite{brandt}) that if
the sequence $(\bQ_n,M_n)_{n\in\zz}$ is stationary and ergodic,
and the following Assumption~\ref{assume} is imposed, then
for any $\bX_0$ series $\bX_n$ converges in distribution,
as $n\to\infty,$ to the random equilibrium
\beq
\bX=\bQ_0+\sum_{k=1}^\infty \Pi_{-k+1}\bQ_{-k},
\feq
which is the unique initial value making $(\bX_n)_{n \geq 0}$ into a stationary sequence.
\par
For $\bQ\in\rr^d$ define $\|\bQ\|=\max_{1\leq i \leq d} |\bQ(i)|$  and
let $\|M\|=\sup_{\bQ\in\rr^d,\|\bQ\|=1}\|M\bQ\|$ denote the corresponding operator norm
for a $d\times d$ matrix $M.$ The following condition ensures the existence and the uniqueness of the
stationary solution to \eqref{leqn}. The condition is also known to be close
to necessity (see \cite{dversa}).
\begin{assume}
\label{assume}
$\mbox{}$
\begin{itemize}
\item [(A1)]
$E\bigl(\log^+ \|M_0\|\bigr) <+\infty$ and $E\bigl(\log^+ \|\bQ_0\|\bigr) <+\infty,$ where $x^+:=\max\{x,0\}$
for $x\in\rr.$
\item [(A2)]
The top Lyapunov exponent $\lambda=\lim_{n\to\infty} \frac{1}{n}\log \|M_1M_2\cdots M_n\|$ is strictly negative.
\end{itemize}
\end{assume}
The stationary solution $\bX$ of the stochastic difference equation \eqref{leqn} has been studied by many authors.
Assuming the existence of a certain ``critical exponent" for $M_n,$ the distribution tails
$P(\bX\cdot \by>t)$ and $P(\bX\cdot \by<-t)$ for a deterministic vector $\by\in\rr^d$
were shown to be regularly varied (in fact, power tailed) in \cite{kesten-randeq} (for $d=1$ an alternative proof is given in
\cite{goldie}). Under different assumptions and for $d=1$ only, similar results for the
tails of $\bX$ were obtained in \cite{grey,trakai75}. The multivariate recursion \eqref{leqn} and tails of
its stationary solution $\bX$ were studied in \cite{guivarch3,guivarch,klupp} under conditions
similar to those of \cite{kesten-randeq}, and in \cite{maver, multi1} extending the one-dimensional
setup of \cite{grey,trakai75}. In all the works mentioned above, it is assumed that
$(\bQ_n,M_n)_{n\in\zz}$ is an i.i.d. sequence, and \cite{maver, multi1} suppose in addition
that the sequences $(\bQ_n)_{n\in\zz}$ and $(M_n)_{n\in\zz}$ are mutually independent.
\par
The goal of this paper is to extend the results of \cite{grey,trakai75}
to the case where $(Q_n,M_n)_{n \in \zz}$ are induced by a Markov chain.
The extension is desirable in many, especially financial, applications,
see for instance \cite{infor,msee}. We remark that in dimension one the results of \cite{kesten-randeq,goldie} (where $M_n$ is
dominant in determining the tail behavior of $\bX$) and \cite{grey,trakai75} (where $\bQ_n$ is dominant)
were extended to a Markovian setup in \cite{saporta,omar} and \cite{mrec7}, respectively.
\par
Let ${\bf I}_A$ denote the indicator function of the set $A,$ that is ${\bf I}_A$ is
one or zero according to whether the event $A$ occurs or not.
\begin{definition}
\label{inde}
The coefficients $(\bQ_n,M_n)_{n\in\zz}$ are said to be induced by a sequence of random variables
$(Z_n)_{n \in \zz},$ each valued in a finite set $\cald,$ if there exists a sequence of
independent random pairs $(\bQ_{n,i}, M_{n,i})_{n\in\zz,i\in\cald}$ with $\bQ_{n,i}\in\rr^d$ and $M_{n,i}$ being $d\times d$ matrices,
such that for a fixed $i\in\cald,$ $(\bQ_{n,i},M_{n,i})_{n\in\zz}$ are i.i.d and
\beqn
\label{gigs}
\bQ_n=\sum_{j\in\cald}\bQ_{n,j}\odin{Z_n=j}=\bQ_{n,Z_n}
\qquad
\mbox{\em and}
\qquad
M_n=\sum_{j\in\cald}M_{n,j}\odin{Z_n=j}=M_{n,Z_n}.
\feqn
\end{definition}
Notice that the randomness of the coefficients $(\bQ_n)_{n\in\zz}$ induced by a sequence $(Z_n)_{n\in\zz}$ is due to two factors:
\begin{itemize}
\item [ 1)] to the randomness of the underlying auxiliary process $(Z_n)_{n\in\zz},$ which can be thought as representative
of the ``state of the external world,"
\item[] and, given the value of $Z_n,$
\item [ 2)] to the ``intrinsic" randomness of characteristics of the system which is captured by the random pairs $(\bQ_{n,Z_n},M_{n,Z_n}).$
\end{itemize}
The independence of $\bQ_{n,i}$ and $M_{n,i}$ is not supposed in the above definition.
Note that when $(Z_n)_{n\in\zz}$ is a finite Markov chain, \eqref{gigs} defines a {\em Hidden Markov Model} (HMM).
See for instance \cite{technion} for a survey of HMM and their applications in various areas.
\par
We will further assume that the vectors $\bQ_{n,i}$ are {\em multivariate regularly varying}.
Heavy tailed HMM have been considered for instance in \cite{hmht}, see also references therein.
Recall that, for $\alpha \in\rr,$ a function $f:\rr\to\rr$ is {\em regularly varying of index $\alpha$}
if $f(t)=t^\alpha L(t)$ for some $L(t):\rr\to\rr$ such that $L(\lambda t)\sim L(t)$
for all $\lambda >0$ (that is $L(t)$ is {\em slowly varying}).
Here and henceforth $f(t)\sim g(t)$ (we will omit ``$t\to\infty$" as a rule) means $\lim_{t\to\infty} f(t)/g(t)=1.$
\par
Let $S^{d-1}$ denote the unit sphere in $\rr^d$ with respect
to the norm $\|\cdot\|.$
\begin{definition}
\label{rvar}
A random vector $\bQ \in\rr^d$ is said to be regularly varying with index $\alpha> 0$
if there exist a function $\ba:\rr\to\rr$ regularly varying with index $1/\alpha$ and
a finite Borel measure $\ms_\bQ$ on $S^{d-1}$ such that for all $t >0,$
\beqn
\label{dprop}
n P\bigl(\|\bQ\|>ta_n;\,\bQ/\|\bQ\|\in \cdot\bigr) \overset{v}{\to}_{n\to\infty}
=t^{-\alpha}\ms_\bQ(\cdot), \qquad \mbox{\em as}~n\to\infty,
\feqn
where $ \overset{v}{\to}$ denotes the vague convergence on $S^{d-1}$ and $a_n:=\ba(n).$
\par
We denote by $\calr_{d,\alpha,\ba}$ the set of all $d$-vectors regularly varying with index $\alpha,$
associated with function $\ba$ by \eqref{dprop}.
\end{definition}
Let $E$ be a locally compact Hausdorff topological space.
The vague convergence of measures $\nu_n\overset{v}{\to}_{n\to\infty}\nu$ for finite measures $\nu_n,$ $n\geq 0,$ and $\nu$
on $E$ means (see for instance Proposition~3.12 in \cite{extreme}) that $\limsup_{n\to\infty} \nu_n(K) \leq \nu(K)$ for all compact
$K\subset E$ and $\liminf_{n\to\infty} \nu_n(G) \geq \nu(G)$ for all relatively compact open sets $G\subset E.$
In this paper we consider vague convergence on either $S^{d-1}$ or $\ol \rr_0^d:=[-\infty,\infty]^d\backslash\{\bo\},$
where $\bf 0$ stands for the zero vector in $\rr^d.$ In both spaces the topology is inherited from $\rr^d$
(in the case of $\ol \rr_0$ by adding neighborhoods of infinity and removing neighborhoods of zero, see for instance
\cite{foundations} for more details) and can be defined using an appropriate metric making both into
a locally compact Polish (complete separable metric) space. A set $K\subset \ol \rr_0^d$ is relatively compact
if its closure does not include $\bo,$ which makes the space $\ol \rr_0^d$ especially useful when convergence of
regularly varying distributions is considered.
\par
The definition \eqref{dprop} is norm-independent and turns out to be
equivalent to the following condition (see for instance \cite{foundations,pointp} or \cite{eth}):
\begin{itemize}
\item [] There is a Radon measure $\nu$ on $\ol \rr_0^d$ such that
$n P\bigl(a_n^{-1}\bQ \in \cdot\bigr) \overset{v}{\to}_{n\to\infty} \nu(\cdot).$
The measure $\nu$ is referred to as the {\em measure of regular variation} associated with $(\bQ,\ba).$
\end{itemize}
The regular variation of a random vector $\bQ \in \rr^d$ implies
that its one-dimensional projections have regularly varying tails of a similar
structure. More precisely, if $\bQ$ is regularly varying then for any $\bx \in \rr^d,$
\beqn
\label{kprop}
\lim_{t\to\infty}\fracd{P\bigl(\bQ \cdot \bx>t\bigr)}{t^{-\alpha}L(t)}=w(\bx)
\feqn
for a slowly varying function $L$ and some $w(\bx):\rr^d\to\rr$ which is not identically zero.
The property \eqref{kprop} was used as a definition of regular variation in \cite{kesten-randeq}, and it
turns out to be equivalent to \eqref{dprop} for all non-integer $\alpha$ as well as for odd integers
provided that $\bQ$ has non-negative components with a positive probability \cite{chara}. The question whether
\eqref{kprop} and \eqref{dprop} are equivalent for even integers $\alpha$ in higher dimensions remains open.
\par
In this paper we impose the following conditions on the coefficients $(\bQ_n,M_n)_{n\in\zz}.$
\begin{assume}
\label{agg}
Let $(Z_n)_{n\in\zz}$ be an irreducible Markov chain with transition matrix $H$ and stationary distribution $\pi$
defined on a finite state space $\cald.$ Suppose that the coefficients $(\bQ_n,M_n)_{n\in\zz}$ in \eqref{leqn} are
induced by the stationary sequence $(Z_n)_{n\in\zz},$ Assumption~\ref{assume} is satisfied, and, in addition, there exist
a constant $\alpha>0$ and a regularly varying function $\ba:\rr\to\rr$ such that
\item [(A3)] For each $i\in\cald,$ $\bQ_{0,i}\in \calr_{d,\alpha,\ba}$ with an associated measure of regular variation
$\mu_i.$
\item [(A4)] $\Lambda(\beta):=\limsup_{n\to\infty} \frac{1}{n}\log E\bigl(\|\Pi_{-n}\|^\beta\bigr)<0$ for some
$\beta>\alpha.$ In particular,
\beqn
\label{ninep}
\mbox{\em There exists $m>0$ such that}~ E\bigl(\|\Pi_{-m}\|^\alpha\bigr)<1~\mbox{\em and}~E\bigl(\|\Pi_{-m}\|^\beta\bigr)<1.
\feqn
\end{assume}
The following theorem extends results of \cite{grey,trakai75,maver,mrec7} to multivariate
recursions of the form \eqref{leqn} with Markov-dependent coefficients.
\begin{theorem}
\label{vart}
Let Assumptions~\ref{agg} hold. Then $\bX \in \calr_{d,\alpha,\ba}$ with measure of regular variation $\mu_\bX(\cdot)=\sum_{k=-\infty}^0 E\bigl(\mu_{_{Z_k}} \circ \Pi_{k+1}^{-1}(\cdot)\bigr),$ where $\mu \circ \Pi^{-1}(\cdot)$ stands for $\mu\bigl(\{\bx:\Pi\bx\in\cdot\}\bigr).$
\end{theorem}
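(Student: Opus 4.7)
The strategy is a truncation argument applied to the series representation $\bX=\sum_{k=0}^\infty \Pi_{-k+1}\bQ_{-k}$ (with the convention $\Pi_1:=I$). Writing $\bX=S_N+R_N$ with $S_N=\sum_{k=0}^N \Pi_{-k+1}\bQ_{-k}$, the plan is to prove three facts: that $S_N\in\calr_{d,\alpha,\ba}$ with measure $\mu_N:=\sum_{k=0}^N E(\mu_{Z_{-k}}\circ\Pi_{-k+1}^{-1})$, that $\mu_N\to\mu_\bX$ vaguely on $\ol\rr_0^d$, and that the remainder is uniformly negligible in the sense $\limsup_{n\to\infty} nP(\|R_N\|/a_n>\veps)\to 0$ as $N\to\infty$ for every $\veps>0$. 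A standard sandwich then delivers $nP(\bX/a_n\in\cdot)\to\mu_\bX$ vaguely.

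First I would verify regular variation of each single summand $\Pi_{-k+1}\bQ_{-k}$ with measure $E(\mu_{Z_{-k}}\circ\Pi_{-k+1}^{-1})$. The key structural observation is that conditionally on the whole chain $(Z_n)_{n\in\zz}$ the vector $\bQ_{-k}=\bQ_{-k,Z_{-k}}$ depends only on the independent pair indexed by time $-k$, while $\Pi_{-k+1}=M_0 M_{-1}\cdots M_{-(k-1)}$ depends only on pairs indexed by times $-(k-1),\ldots,0$; hence the two are conditionally independent given the chain. I would then prove a matrix-vector Breiman lemma: if $\bQ\in\calr_{d,\alpha,\ba}$ with measure $\mu$ and $A$ is an independent random matrix with $E\|A\|^\beta<\infty$ for some $\beta>\alpha$, then $A\bQ\in\calr_{d,\alpha,\ba}$ with measure $E(\mu\circ A^{-1})$; applying it conditionally on $(Z_n)$ and integrating produces the claim.

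For the partial sum $S_N$ I would establish joint regular variation of the block vector $(\Pi_{-j+1}\bQ_{-j})_{0\leq j\leq N}$ in $\rr^{d(N+1)}$ with limit measure concentrated on the coordinate-axis subspaces; the addition map, being continuous on complements of neighborhoods of $\bo$, then transports this into regular variation of $S_N$ with measure $\mu_N$. This reduces to the tail-independence estimate $nP(\|\Pi_{-j+1}\bQ_{-j}\|/a_n>\veps,\,\|\Pi_{-k+1}\bQ_{-k}\|/a_n>\veps)\to 0$ for $j\neq k$. Taking $j<k$ and conditioning on $\Pi_{-k+1}$ and $Z_{-k}$, Potter-type bounds give $P(\|\Pi_{-k+1}\bQ_{-k}\|>ta_n\mid \Pi_{-k+1},Z_{-k})\leq C(ta_n)^{-\alpha}\|\Pi_{-k+1}\|^{\alpha}$ up to a slowly varying factor, and a H\"older splitting with exponent $q>1$ satisfying $\alpha q<\beta$ (available by~(A4)) combined with $P(\|\Pi_{-j+1}\bQ_{-j}\|>ta_n)=O(n^{-1})$ produces joint decay strictly faster than $n^{-1}$.

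The hardest step, which I expect to be the main obstacle, is the tail control of the remainder $R_N$. I would truncate each $\bQ_{-k}=\bQ_{-k}^{s}+\bQ_{-k}^{l}$ at level $\delta a_n$ and handle the two parts separately: on the ``small'' part, conditional independence of $\bQ_{-k}$ and $\Pi_{-k+1}$ combined with a $\beta$-th moment bound and the exponential decay $E\|\Pi_{-k+1}\|^\beta\leq Ce^{-ck}$ extracted from~(A4) control $n(a_n)^{-\beta} E\|\sum_{k>N}\Pi_{-k+1}\bQ_{-k}^{s}\|^\beta$ by $O(\rho^N)$ uniformly in $n$; on the ``large'' part, a union bound together with the conditional matrix-vector tail bound from the previous step controls $nP(\|\sum_{k>N}\Pi_{-k+1}\bQ_{-k}^{l}\|>\veps a_n)$ also by $O(\rho^N)$. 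The non-i.i.d.\ Markov structure forces every use of independence to be performed conditionally on the chain $(Z_n)$, which is the main technical subtlety throughout.
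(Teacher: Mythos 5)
Your route is genuinely different from the paper's. The paper's engine is a single ``addition lemma'' (Lemma~A.1): if $\bQ\in\calr_{d,\alpha,\ba}$ is independent of a pair $(\bY,\Pi)$ with $\bY\in\calr_{d,\alpha,\ba}$ and $E\|\Pi\|^\beta<\infty$, then $\bY+\Pi\bQ\in\calr_{d,\alpha,\ba}$ with measure $\nu+E(\mu\circ\Pi^{-1})$. Crucially this lemma permits $\bY$ and $\Pi$ to be arbitrarily dependent, which is exactly what the Markov structure forces once you take $\bY$ to be the running partial sum; the Markov conditioning is then absorbed in a short iteration (Lemma~2.2), and the finite-sum statement \eqref{finitel} is immediate. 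You instead prove regular variation of each single summand by a conditional Breiman lemma and then pass to the finite sum via joint regular variation on $\rr^{d(N+1)}$ plus asymptotic tail independence plus the continuous-mapping step. That is a valid alternative; the tail-independence estimate you sketch (Potter bound in the $\bQ_{-k}$ variable conditionally on $\Pi_{-k+1},Z_{-k}$, then H\"older with $\alpha'q<\beta$) does go through, and it avoids having to state an addition lemma for dependent $(\bY,\Pi)$ at the cost of the extra joint-RV machinery. The paper's decomposition is more parsimonious, yours is more modular.

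The weak point is the remainder control, where your sketch as written has a gap. Truncating each $\bQ_{-k}$ at the level $\delta a_n$ uniformly in $k$ does not by itself exploit the geometric decay of $\|\Pi_{-k+1}\|$: the events $\{\|\bQ_{-k}\|>\delta a_n\}$ have probability $\sim\delta^{-\alpha}/n$ for every $k$, so a bare union bound over $k>N$ diverges, and for the small part the crude bound $E\|\bQ^s_{-k}\|^\beta\le(\delta a_n)^\beta$ leaves a loose factor of $n$ in $n\,a_n^{-\beta}E\|\sum_{k>N}\Pi_{-k+1}\bQ^s_{-k}\|^\beta$, which then does not tend to zero. To make your plan work you need the Karamata-type estimates $E[\|\bQ\|^\beta\,\odin{\|\bQ\|\le\delta a_n}]\asymp(\delta a_n)^\beta\,P(\|\bQ\|>\delta a_n)$ and $E[\|\bQ\|^\gamma\,\odin{\|\bQ\|>\delta a_n}]\asymp(\delta a_n)^\gamma\,P(\|\bQ\|>\delta a_n)$ with $\gamma<\alpha<\beta$, which supply the missing $1/n$, plus either a Minkowski/Markov moment bound or a geometrically weighted union bound $\sum_k c_k\le1$ with $c_k$ chosen so that $c_k^{-\alpha}E\|\Pi_{-k+1}\|^\alpha$ is summable. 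None of this is in the proposal as written. The paper sidesteps precisely this by invoking an external estimate (\cite[Theorem~1.4]{mrec7}, used as \eqref{oned}), which is where the heavy lifting for the infinite tail lives; if you want a self-contained proof, this step deserves to be spelled out in full.
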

The theorem is an instance of the phenomenon ``regular variation in, regular variation out"
for the model \eqref{leqn}. We remark that the mechanisms leading
to regularly varying tails of $\bX$ are quite different in \cite{trakai75,grey} versus \cite{kesten-randeq,goldie}.
In the former case, Kesten's ``critical exponent" is not available, and therefore more explicit assumptions
about distribution of $\bQ_n$ are made. Then $\bQ_n$ dominates and creates cumulative effects,
namely $\bX$ turns out to be regularly varying as a sum of regularly varying terms $\Pi_{n+1}\bQ_n.$
The setup of Assumption~\ref{agg} is particularly appealing because a similar ``cumulative effect" enables one to gain insight
into the structure and fine properties of the sequence $(\bX_n)_{n\in\nn}$, in particular into the asymptotic behavior of both the partial sums as well
as multivariate extremes of $(\bX_n)_{n\in\nn},$ see for instance \cite{samorachev,borovs,ppps,maxima,ldp-rec,langevin4}.
\par
The proof of Theorem~\ref{vart} is included in Section~\ref{proofvart}, with the exception of the main technical lemma (Lemma~\ref{glemma} below)
whose proof is deferred to the Appendix. The proof combines ideas developed in \cite{grey}, \cite{maver},
and \cite{mrec7}. We notice that Grey conjectured in \cite{grey} that using his method it may be possible
to extend the results of \cite{maver} and rid of the assumption that $(\bQ_n)_{n\in\zz}$ and $(M_n)_{n\in\zz}$ are
independent. We accomplish here the program suggested by Grey, and in fact extend it further to
coefficients induced by a finite-state irreducible Markov chains.
\section{Proof of Theorem~\ref{vart}}
\label{proofvart}
The following result extends Lemma~2 in \cite{grey}
and the relation (2.4) in \cite{maver}. Notice, that in contrast to \cite{maver}
we do not assume that $\bQ$ and $M$ are independent.
\begin{lemma}
\label{glemma}
Let $\bY,\bQ$ be random $d$-vectors and $\Pi$ be a random $d\times d$ matrix such that
\begin{itemize}
\item[(i)] $\bQ$ is independent of the pair $(\bY,\Pi)$
\item[(ii)] For some constant $\alpha>0$ and regularly varying $\ba:\rr\to\rr,$
$\bY$ and $\bQ$ belong to $\calr_{d,\alpha,\ba}$ with associated measures of
regular variation measures $\nu$ and $\mu,$ respectively.
\item[(iii)] $E\bigl(\|\Pi\|^\beta\bigr)<\infty$ for some $\beta>\alpha.$
\end{itemize}
Then, $\bY +\Pi\bQ\in \calr_{d,\alpha,\ba}$ with associated measure of regular variation
$\nu (\cdot)  + E\bigl(\mu \circ \Pi^{-1}( \cdot \,)\bigr).$
\end{lemma}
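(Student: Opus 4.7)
The plan is to establish the vague convergence $n P\bigl(a_n^{-1}(\bY+\Pi\bQ)\in\cdot\bigr)\overset{v}{\to}\nu+E(\mu\circ\Pi^{-1})$ on $\ol\rr_0^d$ by testing against continuous compactly supported functions $f$ (which automatically vanish in a neighborhood of $\bo$). The proof splits naturally into three steps: a multivariate Breiman argument that identifies the tail of $\Pi\bQ,$ an asymptotic tail-independence estimate between $\bY$ and $\Pi\bQ,$ and a glueing of the two pieces via a $\delta$-split of the expectation of $f.$

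\emph{Step 1: regular variation of $\Pi\bQ.$} I would condition on $\Pi$ and exploit the independence $\bQ\perp\Pi$ to write $n P(a_n^{-1}\Pi\bQ\in A)=E\bigl[\,n P(a_n^{-1}M\bQ\in A)\big|_{M=\Pi}\bigr].$ For each fixed $M,$ the regular variation of $\bQ$ gives $n P(a_n^{-1}M\bQ\in A)\to\mu\circ M^{-1}(A).$ Potter's bounds applied to the scalar tail $P(\|\bQ\|>\cdot)$ yield the dominating inequality $n P(\|M\bQ\|>r a_n)\leq C_r(1+\|M\|^{\alpha+\eta})$ valid for large $n,$ and choosing $\eta>0$ so that $\alpha+\eta<\beta$ makes the right-hand side integrable against $P_\Pi$ by hypothesis (iii) (with a negligible extra contribution from the region $\|\Pi\|>\delta a_n/t_0$ controlled by Markov at exponent $\beta$). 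Dominated convergence then identifies the limit as $E(\mu\circ\Pi^{-1}).$

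\emph{Step 2: asymptotic independence of $\bY$ and $\Pi\bQ.$} The decisive ingredient is
\[
n P\bigl(\|\bY\|>\delta a_n,\ \|\Pi\bQ\|>\delta a_n\bigr)\;\longrightarrow\;0\qquad(n\to\infty)
\]
for every fixed $\delta>0.$ The difficulty is that $\bY$ and $\Pi$ may be arbitrarily dependent. Conditioning on $(\bY,\Pi)$ and using $\bQ\perp(\bY,\Pi),$ I rewrite the quantity as $n\,E\bigl[\odin{\|\bY\|>\delta a_n}\,h_n(\Pi)\bigr]$ with $h_n(M):=P(\|M\bQ\|>\delta a_n).$ Next I truncate $\bQ$ at level $a_n,$ giving $h_n(M)\leq P(\|\bQ\|>a_n)+P\bigl(\|M\|\cdot\|\bQ\|\odin{\|\bQ\|\leq a_n}>\delta a_n\bigr).$ The first summand is $O(1/n)$ uniformly in $M$ and contributes $O\bigl(P(\|\bY\|>\delta a_n)\bigr)=o(1)$ after multiplication by $n.$ For the second, Markov's inequality at exponent $\beta$ combined with the classical Karamata-type estimate $E\bigl[\|\bQ\|^\beta\odin{\|\bQ\|\leq a_n}\bigr]\sim C a_n^\beta/n$ gives a bound $C\|M\|^\beta/n;$ multiplying by $n$ and integrating against $\odin{\|\bY\|>\delta a_n}\,dP$ produces $C\,E\bigl[\odin{\|\bY\|>\delta a_n}\,\|\Pi\|^\beta\bigr],$ which vanishes by dominated convergence with $\|\Pi\|^\beta$ as integrable dominator and $\odin{\|\bY\|>\delta a_n}\to 0$ almost surely.

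\emph{Step 3: conclusion and main obstacle.} For $f$ continuous and compactly supported in $\ol\rr_0^d,$ with support in $\{\|x\|>2\delta\},$ I split $n E\bigl[f(a_n^{-1}(\bY+\Pi\bQ))\bigr]$ across the three events $\{\|\bY\|>\delta a_n,\,\|\Pi\bQ\|\leq\delta a_n\},$ $\{\|\bY\|\leq\delta a_n,\,\|\Pi\bQ\|>\delta a_n\},$ and $\{\|\bY\|>\delta a_n,\,\|\Pi\bQ\|>\delta a_n\}$ (the complement contributes nothing by the support of $f$). Step~2 kills the last one; on the first, uniform continuity of $f$ permits replacing the argument by $a_n^{-1}\bY$ at uniform cost $\omega_f(\delta),$ and the regular variation of $\bY$ yields in the limit $\int f\,d\nu$ up to $O(\omega_f(\delta))\cdot\nu(\{\|x\|>\delta\});$ the second event is treated symmetrically, using the output of Step~1 in place of the regular variation of $\bY.$ Letting $\delta\downarrow 0$ collapses the continuity errors and delivers the limit $\int f\,d\bigl(\nu+E(\mu\circ\Pi^{-1})\bigr).$ The main obstacle is Step~2: the coupling between $\bY$ and $\Pi$ is unrestricted while only a $\beta$-th moment slightly above $\alpha$ is assumed on $\Pi,$ so the truncation of $\bQ$ at level $a_n$ followed by a Markov bound at the higher exponent $\beta$ is the device that decouples the two factors without requiring $\bY\perp\Pi.$
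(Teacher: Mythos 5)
Your proposal is correct but takes a genuinely different route from the paper. The paper works in the polar-coordinate form of regular variation (conditions \eqref{compact}--\eqref{open} on $S^{d-1}$), decomposes the event $\{\|\bY+\Pi\bQ\|>ta_n\}$ into four pieces $J^{(1)},\dots,J^{(4)}$ according to whether $\|\bY\|$ lands above $(1+\veps)ta_n$, in the band $((1-\veps)ta_n,(1+\veps)ta_n]$, or below, then further splits $J^{(4)}$ by the size of $\|\Pi\|$ and $\|\bY\|$, and invokes the multivariate Breiman lemma as a cited black box for the dominant piece $J^{(4,3)}$. Your argument instead tests vague convergence on $\ol\rr_0^d$ against continuous compactly supported $f$, treats $\bY$ and $\Pi\bQ$ symmetrically via a $\delta$-split, derives the Breiman-type limit for $\Pi\bQ$ from scratch (Potter bound plus $\beta$-moment domination), and isolates the key issue into a single clean asymptotic-tail-independence estimate $nP(\|\bY\|>\delta a_n,\ \|\Pi\bQ\|>\delta a_n)\to 0$, proved by truncating $\bQ$ at level $a_n$, applying Markov at exponent $\beta$, and a Karamata estimate --- this replaces the paper's separate control of $J^{(2)}$, $J^{(3)}$, $J^{(4,1)}$ and $J^{(4,2)}$. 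Both approaches hinge on the same observation that, although $\bY$ and $\Pi$ may be arbitrarily dependent, conditioning on the pair $(\bY,\Pi)$ and exploiting $\bQ\perp(\bY,\Pi)$ lets the regular variation of $\bQ$ do the work; your version is more symmetric and self-contained, the paper's is shorter because Breiman is cited rather than reproved but pays for it with the $\veps$-band bookkeeping. One small imprecision in your Step~3: the error should be $\omega_f(\delta)\cdot\nu(\{\bx:\|\bx\|>r/2\})$ where $r>0$ is a fixed radius with $\mathrm{supp}\,f\subset\{\|\bx\|>r\}$ and $\delta<r/2$, not $\omega_f(\delta)\cdot\nu(\{\bx:\|\bx\|>\delta\})$ --- with $f$ fixed and $\delta\to 0$ the former vanishes while the latter, as written, would not obviously do so since $\nu(\{\|\bx\|>\delta\})\asymp\delta^{-\alpha}$.
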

The proof of Lemma~\ref{glemma} is deferred to the Appendix. The next lemma, which generalizes
Proposition~2.1 of \cite{mrec7}, is the key element of our proof of Theorem~\ref{vart}.
\begin{lemma}
\label{klemma}
Let Assumption~\ref{agg} hold. Fix an integer $k\leq -1$ and let $\bY_{k+1}\in \rr^d$ be a random vector such that
$\bY_{k+1} \in \sigma(Z_n,\bQ_n,M_n: n \geq k+1).$
Let $\bY_k=\bY_{k+1} +\Pi_{k+1}\bQ_k$ and write $\bY_k=\sum_{i\in\cald} \bY_{k,i}\odin{Z_k=i}.$
\par
Then, each vector $\bY_{k,i}$ belongs to $\calr_{d,\alpha,\ba}$ with associated measure of regular variation
$\nu_{k,i}:=E\bigl(\nu_{k+1,Z_{k+1}}(\cdot)\odin{Z_k=i}\bigr)+E\bigl(\mu_i\circ \Pi_{k+1}^{-1}(\cdot)\odin{Z_k=i}\bigr),$
and hence $\bY_k \in \calr_{d,\alpha,\ba}$ with associated measure of regular variation
$E\bigl(\nu_{k+1,Z_{k+1}}(\cdot)\bigr)+E\bigl(\mu_{_{Z_k}}\circ \Pi_{k+1}^{-1}(\cdot)\bigr).$
\end{lemma}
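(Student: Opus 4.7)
The plan is to define $\bY_{k,i}:=\bY_{k+1}+\Pi_{k+1}\bQ_{k,i}$ for each $i\in\cald$, so that $\bY_k=\bY_{k,Z_k}=\sum_{i\in\cald}\bY_{k,i}\odin{Z_k=i}$ as required by the statement, and then to deduce the claim about $\bY_{k,i}$ by applying Lemma~\ref{glemma} after restricting attention to the event $\{Z_k=i\}$. Summing the resulting weighted limits over $i\in\cald$ will yield the measure for $\bY_k$, because $\sum_i\odin{Z_k=i}=1$ and $\sum_i\mu_i\odin{Z_k=i}=\mu_{Z_k}$.

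The hypotheses of Lemma~\ref{glemma} are then checked as follows. By Definition~\ref{inde} the pair $(\bQ_{k,i},M_{k,i})$ lies in an array of mutually independent pairs that is independent of the chain $(Z_n)$; since $(\bY_{k+1},\Pi_{k+1})$ is measurable with respect to $\sigma(Z_n,\bQ_n,M_n:n\geq k+1)$ and therefore uses only intrinsic pairs with time index $n\geq k+1$, the vector $\bQ_{k,i}$ is independent of the triple $(\bY_{k+1},\Pi_{k+1},\odin{Z_k=i})$. Regular variation of $\bQ_{k,i}$ with measure $\mu_i$ is Assumption~\ref{agg}(A3), and the moment bound $E(\|\Pi_{k+1}\|^\beta)<\infty$ is provided by Assumption~\ref{agg}(A4). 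Regular variation of $\bY_{k+1}$ is the implicit induction hypothesis on which the lemma is intended to be iterated, namely that $\bY_{k+1}=\sum_j\bY_{k+1,j}\odin{Z_{k+1}=j}$ with $\bY_{k+1,j}$ regularly varying in the same weighted sense with measure $\nu_{k+1,j}$. Applying Lemma~\ref{glemma} on $\{Z_k=i\}$ then produces the two summands of $\nu_{k,i}$: the $\bY_{k+1}$-contribution is obtained by decomposing over the value of $Z_{k+1}$, via
\beq
nP\bigl(\bY_{k+1}/a_n\in A,\,Z_k=i\bigr)=\sum_{j\in\cald} nP\bigl(\bY_{k+1,j}/a_n\in A,\,Z_{k+1}=j,\,Z_k=i\bigr),
\feq
and then invoking the Markov property of $(Z_n)$---which, by the conditional independence of $\bY_{k+1,j}$ and $Z_k$ given $Z_{k+1}$, factors each summand and allows one to pass to the limit $E\bigl(\nu_{k+1,Z_{k+1}}(\cdot)\odin{Z_k=i}\bigr)$; the $\Pi_{k+1}\bQ_{k,i}$-contribution yields $E\bigl(\mu_i\circ\Pi_{k+1}^{-1}(\cdot)\odin{Z_k=i}\bigr)$ directly from Lemma~\ref{glemma} together with the independence of $\bQ_{k,i}$ from $Z_k$, which lets the indicator slide inside the expectation over $\Pi_{k+1}$.

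The main technical obstacle is that the measure $\nu_{k,i}$ in the conclusion is not a regular-variation measure of $\bY_{k,i}$ in the plain sense of Definition~\ref{rvar}; it is a sub-probability object weighted by the event $\{Z_k=i\}$, capturing only the contribution from that event to the distributional tail of $\bY_k$. Establishing vague convergence of $nP(\bY_{k,i}/a_n\in\cdot,\,Z_k=i)$ to $\nu_{k,i}(\cdot)$ therefore requires a correspondingly weighted variant of Lemma~\ref{glemma} and a careful argument that vague convergence is preserved under intersection with $\{Z_k=i\}$ and under the Markov decomposition over $Z_{k+1}$. The irreducibility assumed in Assumption~\ref{agg} ensures that $\pi_i=P(Z_k=i)>0$ for every $i\in\cald$, so that the conditioning is benign; the delicate part is the bookkeeping of the indicators, since the main probabilistic content---preservation of regular variation under a random linear transformation plus an independent summand---is already packaged in Lemma~\ref{glemma}.
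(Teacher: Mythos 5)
Your proposal is correct and follows essentially the same route as the paper: you decompose $nP(\bY_k\in a_n A,\,Z_k=i)$ over the value of $Z_{k+1}$, exploit the independence of $\bQ_{k,i}$ from $(\bY_{k+1},\Pi_{k+1},Z_k,Z_{k+1})$ together with the Markov property of $(Z_n)$ to reduce each summand to a setting where Lemma~\ref{glemma} applies, and then re-integrate. The ``technical obstacle'' you flag --- that $\nu_{k,i}$ is the weighted measure obtained by restricting to $\{Z_k=i\}$ rather than a plain regular-variation measure of $\bY_{k,i}$ --- is real but dissolves once you condition jointly on $(Z_k,Z_{k+1})=(i,j)$, apply the unweighted Lemma~\ref{glemma} to the conditional law, and sum against $\pi_i H(i,j)$, which is exactly the paper's chain of equalities; your observation that the paper's bookkeeping of indicators and $\pi_i$-factors is loose is fair, and your more explicit definition $\bY_{k,i}:=\bY_{k+1}+\Pi_{k+1}\bQ_{k,i}$ is a reasonable way to make the statement's $\bY_{k,i}$ concrete.
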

\begin{proof}
Since $P\bigl((\bQ_k,\Pi_{k+1},\bY_{k+1})\in \cdot|Z_{k+1}=i,Z_k=j\bigr)=
P\bigl((\bQ_{1,j},\Pi_{k+1,i},\bY_{k+1,i})\in\cdot),$
using Lemma~\ref{glemma} we obtain for Borel subsets $A\subset \ol \rr_0^d,$
\beq
&&
P\bigl(a_n^{-1}\bY_{k,i}\in A\bigr)=\sum_{j \in \cald} P\bigl(\bY_{k+1}+\Pi_{k+1}\bQ_k\in a_n A\bigl| Z_{k+1}=j,Z_k=i\bigr)\pi_iH(i,j)
\\
&&
\quad
=
\sum_{j \in \cald} P\bigl(\bY_{k+1,j}+\Pi_{k+1,j}\bQ_{k,i}\in a_n A\bigr)\pi_iH(i,j).
\\
&&
\qquad
=
\sum_{j \in \cald} \bigl[\nu_{k+1,j}(A)+E\bigl(\mu_i\circ \Pi_{k+1,j}^{-1}(A)\bigr)\bigr]\pi_iH(i,j)
\\
&&
\qquad
=
E\bigl(\nu_{k+1,Z_{k+1}}(A)\odin{Z_k=i}\bigr)+E\bigl(\mu_i\circ \Pi_{k+1}^{-1}(A)\odin{Z_k=i}\bigr).
\feq
The proof of the lemma is completed.
\end{proof}
We are now in position to complete the proof of Theorem~\ref{vart}. First we
introduce some notations. Throughout the rest of the paper:
\\
$\mbox{}$
\\
For a constant $\delta>0$ and a set $K$ (either in $S^{d-1}$ or $\ol \rr_0^d$), let $K^\delta$ denote
the closed $\delta$-neighborhood of $K,$ that is $K^\delta=\{\bx: \exists~\by\in K~\mbox{s.t.}~\|\bx-\by\|\leq \delta\}.$
For $\bx\in \rr^d\slash\{0\},$ let $\ol \bx$ denote its direction $\bx/\|\bx\|.$
For a set $G,$ let $\ol G$ denote its closure $\bigcap_{\delta>0} G^\delta.$
\\
$\mbox{}$
\\
The final step in the proof is similar to the corresponding
argument in \cite{maver}, and is reproduced here for the sake of completeness.
It follows from Lemma~\ref{klemma} that, for any $L\in\nn$ and Borel $A\subset \ol \rr_0^d,$
\beqn
\label{finitel}
\lim_{n\to\infty} nP\Bigl(\sum_{k=-L}^0 \Pi_{k+1}\bQ_k\in a_n A\Bigr)=\sum_{k=-L}^0
E\bigl(\mu_{_{Z_k}}\circ \Pi_{k+1}^{-1}(A)\bigr),
\feqn
while \cite[Theorem~1.4]{mrec7} yields with the help of \eqref{ninep} that for any constant $\delta>0,$
\beqn
\label{oned}
\lim_{L\to\infty} \limsup_{n\to\infty}\, n P\Bigl( \sum_{k=-\infty}^{-L-1}  \|\Pi_{k+1}\| \cdot \|\bQ_k \| > \delta a_n \Bigr)
=0.
\feqn
For a compact set $K\subset \ol \rr_0^d,$ we have\\
$
P\Bigl( \sum\limits_{k=-\infty}^0 \Pi_{k+1} \bQ_k  \in a_n K \Bigr)
\leq P\Bigl(\sum\limits_{k=-L}^0 \bQ_k\Pi_{k+1}  \in a_n K^\delta \Bigr)
+P\Bigl(\sum\limits_{k=-\infty}^{-L-1}  \|\Pi_{k+1}\bQ_k \|  > \delta a_n \Bigr).
$
Hence, $\limsup_{n\to \infty} nP\Bigl(\frac{1}{a_n}\sum_{k=-\infty}^0  \Pi_{k+1} \bQ_k  \in K \Bigr) \leq
\sum_{k=-L}^0 E \bigl(\mu_{_{Z_k}}\circ \Pi_{k+1}^{-1}(K^\delta) \bigr)$ in virtue of \eqref{finitel} and \eqref{oned}.
Letting then $\delta \to 0,$ we obtain
\beqn
\label{ap4}
\limsup_{n\to \infty} nP\Bigl( a_n^{-1}\sum_{k=-\infty}^0 \Pi_{k+1}\bQ_k \in K \Bigr)
\leq \sum_{k=-\infty}^0 E \bigl(\mu_{_{Z_k}}\circ \Pi_{k+1}^{-1}(K) \bigr).
\feqn
Let $G\subset \ol \rr_0^d$ be relatively compact and open.
Consider open relatively compact sets $G_k \subset\ol \rr_0^d,$ $k\in\nn,$
such that $G_k\subset \ol G_k \subset G_{k+1} \subset G.$
For any $m,L,$ there is $\veps >0$ such that
$
\Bigl\{\sum\limits_{k=-L}^0 \Pi_{k+1} \bQ_k  \in a_n G_m\Bigr\}\bigcup
\Bigl\{\Bigl\|\sum\limits_{k=-\infty}^{-L-1} \Pi_{k+1}\bQ_k \Bigl\|\leq \veps a_n \Bigr\}
\subset \Bigr\{\sum\limits_{k=-\infty}^0 \Pi_{k+1} \bQ_k  \in a_n G \Bigr\}.
$
\\
Therefore, with $\calf_0:=\sigma(M_n,Z_n:n\leq 0),$ we have for any $G_m,$
\beq
&&\liminf_{n\to \infty} n P\Bigl( a_n^{-1} \sum_{k=-\infty}^0 \Pi_{k+1} \bQ_k  \in G  \Bigr) =
\liminf_{n\to \infty} nE \Bigl[P\Bigl( a_n^{-1} \sum_{k=-\infty}^0 \Pi_{k+1} \bQ_k \in G | \calf_0 \Bigr)\Bigr]
\\
&&
\quad
= \liminf_{n \to \infty} E\Bigl[nP\Bigl(a_n^{-1}\sum_{k=-L}^0 \Pi_{k+1} \bQ_k \in G_m \Bigr|\calf_0 \Bigr)
P\Bigl(\Bigl\| \sum_{k=-\infty}^{-L-1} \Pi_{k+1} \bQ_k\Bigr\| \leq \veps a_n \Bigl| \calf_0 \Bigr)\Bigr] \\
&&
\quad
\geq
E \Bigl[\liminf_{n \to \infty}nP\Bigl(a_n^{-1} \sum_{k=-L}^0 \Pi_{k+1} \bQ_k \in G_m \Bigr|\calf_0\Bigr)
P\Bigl(\Bigl\|\sum_{k=-\infty}^{-L-1} \Pi_{k+1} \bQ_k \Bigr\|\leq a_n \veps \Bigl|\calf_0\Bigr)\Bigr],
\feq
where for the last inequality we used Fatou's lemma. Hence, \eqref{finitel} yields the lower bound
$\liminf\limits_{n\to \infty} nP\Bigl( a_n^{-1}\sum_{k=-\infty}^0 \Pi_{k+1}\bQ_k \in G \Bigr)
\geq \sum_{k=-L}^0 E \bigl(\mu_{_{Z_k}}\circ \Pi_{k+1}^{-1}(G_m) \bigr).$
Letting $m\to \infty$ and then $L\to \infty,$ $\liminf\limits_{n\to \infty} nP\Bigl( a_n^{-1}\sum_{k=-\infty}^0 \Pi_{k+1}\bQ_k \in G \Bigr)
\geq \sum_{k=-\infty}^0 E \bigl(\mu_{_{Z_k}}\circ \Pi_{k+1}^{-1}(G) \bigr).$
This bound along with \eqref{ap4} yield the claim of the theorem provided
that we have shown that $\mu_\bX(\cdot)=E \bigl(\mu_{_{Z_k}}\circ \Pi_{k+1}^{-1}(\cdot) \bigr)$ is a
Radon measure on $\ol \rr_0^d,$ that is (see for instance
Remark~3.3 in \cite{multi1}) $\mu_\bX(K)<\infty$ for any compact set $K\in \ol \rr_0^d.$
Toward this end notice that since $\epsilon_K:=\inf_{\bx\in K} \|\bx\|>0$ and
in virtue of {\em (A4)} of Assumption~\ref{agg},
\beq
&& \mu_\bX(K)\leq \sum_{k=-\infty}^0 E \Bigl[\sum_{i\in\cald} \mu_i\circ \Pi_{k+1}^{-1}(K)\Bigr]=
\sum_{k=-\infty}^0 E \Bigl[\sum_{i\in\cald} \mu_i\bigl(\{\bx:\Pi_{k+1}\bx\in K\}\bigr)\Bigr]
\\
&&
\quad
\leq
\sum_{k=-\infty}^0 E \Bigl[\sum_{i\in\cald} \mu_i\bigl(\{\bx: \|\bx\|\geq \epsilon_K \|\Pi_{k+1}\|^{-1}\}\bigr)\Bigr]=
\sum_{k=-\infty}^0 |\cald|\epsilon_K^{-\alpha}\,E\bigl(\|\Pi_{k+1}\|^\alpha\bigr)<\infty,
\feq
completing the proof of the theorem. \hfill \hfill \qed
\appendix
\section{Proof of Lemma~\ref{glemma}}
We need to show that for any compact set $K\subset S^{d-1},$
\beqn
\label{compact}
\limsup_{n\to\infty} nP\bigl(\|\bY +\Pi\bQ\|>ta_n, \ol{\bY +\Pi\bQ} \in K \bigr)\leq
t^{-\alpha}\bigl[\ms_\bY (K)  + E\bigl(\ms_\bQ \circ \Pi^{-1}( K \,)\bigr)\bigr]
\feqn
while for any open set $G\subset S^{d-1},$
\beqn
\label{open}
\liminf_{n\to\infty} nP\bigl(\|\bY +\Pi\bQ\|>ta_n, \ol{\bY +\Pi\bQ} \in G \bigr)\geq
t^{-\alpha}\bigl[\ms_\bY (G)  + E\bigl(\ms_\bQ \circ \Pi^{-1}( G \,)\bigr)\bigr]
\feqn
To this end, we will use a decomposition resembling the one exploited in \cite[Lemma~2]{grey} and \cite[Proposition~2.1]{mrec7}.
Namely, we fix $\veps>0$ and write for any Borel set $A \subset S^{d-1},$
$nP\bigl(\|\bY+\Pi \bQ\|>ta_n,\overline{\bY+\Pi\bQ}\in A\bigr)
=J_{t,A}^{(1)}(n)-J_{t,A}^{(2)}(n)+J_{t,A}^{(3)}(n)+J_{t,A}^{(4)}(n),$
where
\beq
J_{t,A}^{(1)}(n)&=& nP\bigl(\|\bY\|>t(1+\veps)a_n,\overline{\bY+\Pi\bQ}\in A\bigr),\\
J_{t,A}^{(2)}(n)&=& nP\bigl(\|\bY\|>(1+\veps)ta_n,\, \|\bY+\Pi \bQ\|\leq ta_n,\overline{\bY+\Pi\bQ}\in A\bigr)\\
J_{t,A}^{(3)}(n)&=& nP\bigl((1-\veps)ta_n<\|\bY\|\leq (1+\veps)ta_n, \|\bY+\Pi\bQ\|>ta_n,\overline{\bY+\Pi\bQ}\in A\bigr)\\
J_{t,A}^{(4)}(n)&=& nP\bigl(\|\bY\|\leq (1-\veps)ta_n,\|\bY+\Pi\bQ\|>ta_n,\overline{\bY+\Pi\bQ}\in A\bigr).
\feq
Fix a constant $\delta\in (0,1)$ and let $K\subset S^{d-1}$ be an arbitrary compact set. Then
$J_{t,K}^{(1)}(n)\leq nP\bigl(\|\bY\|>t(1+\veps)a_n,\ol \bY \in K^\delta\bigr)+
nP\bigl(\|\bY\|>t(1+\veps)a_n,\|\overline{\bY}-\overline{\bY+\Pi\bQ}\|>\delta \bigr).$
It is not hard to check that for any constant $\gamma>0$ and vectors $\bx,\by\in\ol \rr_0^d,$
\beqn
\label{normeq}
\|\overline{\by}-\overline{\bx+\by}\|>\gamma~\mbox{implies}~\|\bx\|>
\fracd{\gamma\|\by\|}{2+\gamma}.
\feqn
Thus $nP\bigl(\|\bY\|>t(1+\veps)a_n,\|\overline{\bY}-\overline{\bY+\Pi\bQ}\|>\delta \bigr) \leq
nP\Bigl(\|\bY\|>ta_n,\|\Pi\| \|\bQ\|>\fracd{\delta ta_n}{3}\Bigr)\leq
nP\bigl(\|\bY\|>ta_n\bigr)P\Bigl(\|\bQ\| \geq  \fracd{\delta ta_n^{\frac{\beta - \alpha}{2\beta}}}{3}\Bigr) +
nP\bigl(\|\Pi\| \geq a_n^{\frac{\alpha+\beta}{2\beta}}\bigr).$
Since $P\bigl(\|\Pi\| \geq a_n^{\frac{\alpha+\beta}{2\beta}}\bigr)\leq a_n^{-\frac{\alpha+\beta}{2}}\,E\bigl(\|\Pi\|^\beta\bigr),$
we have $\limsup\limits_{n\to\infty}\, nP\bigl(\|\bY\|>t(1+\veps)a_n,\|\ol \bY -\overline{\bY+\Pi\bQ}\|>\delta \bigr)=0.$
Thus
\beqn
\label{jeyf1}
\limsup\limits_{n\to\infty} J_{t,K}^{(1)}(n) \leq \lim\limits_{\delta\to 0}\limsup\limits_{n\to\infty}
\,nP\bigl(\|\bY\|>t(1+\veps)a_n,\ol \bY\in K^\delta\bigr)=t^{-\alpha} \ms_\bY(K).
\feqn
Since $J_{t,K}^{(2)}(n)\leq  nP\bigl(\|\Pi\|\geq ta_n^{\frac{\alpha + \beta}{2\beta}}\bigr)+
nP\bigl(\|\bY\|>(1+\veps)ta_n\bigr)P\bigl(\|\bQ\|\geq \veps ta_n^{\frac{\beta - \alpha}{2\beta}}\bigr),$ we have
\beqn
\label{jeyf2}
\limsup_{n\to\infty} J_{t,K}^{(2)}(n) =0.
\feqn
Next, $J_{t,K}^{(3)}(n)\leq  nP\bigl((1-\veps)t a_n <\|\bY\|\leq (1+\veps)ta_n\bigr)
\sim t^{-\alpha}\bigl[(1-\veps)^{-\alpha}-(1+\veps)^{-\alpha}\bigr].$
Hence
\beqn
\label{jeyf3}
\lim_{\veps\to 0} \limsup_{n\to\infty} J_{t,K}^{(3)}(n)=0.
\feqn
\par
Define $g_n(\bx,A)=nP\bigl(\|\bY+\Pi\bQ\|>ta_n,\overline{\bY+\Pi\bQ}\in K\bigr|\bY=\bx,\,\Pi=A\bigr).$
Fix constants $\rho>0$ and $\eta>0,$ and let $J_{t,K}^{(4)}(n)=J_{t,K}^{(4,1)}(n)+J_{t,K}^{(4,2)}(n)+J_{t,K}^{(4,3)}(n),$ where
\beq
J_{t,K}^{(4,1)} (n)&=&
E\bigl( g(\bY,\Pi){\bf I}_{\{\|\bY\|\leq(1-\veps)ta_n\}}\odin{\|\Pi\|>\rho} \bigr)\\
J_{t,K}^{(4,2)} (n)&=& E\bigl( g(\bY,\Pi)\odin{\|\bY\|\leq(1-\veps)ta_n}
{\bf I}_{\{\|\Pi\|\leq \rho\}}{\bf I}_{\{\|\bY\|>\eta\}} \bigr)\\
J_{t,K}^{(4,3)} (n)&=& E\bigl( g(\bY,\Pi)\odin{\|\bY\|\leq(1-\veps)ta_n}
{\bf I}_{\{\|\Pi\|\leq \rho\}} \odin{\|\bY\|\leq \eta} \bigr).
\feq
The first two terms tend to zero as $\eta$ and $\rho$ go to infinity. More precisely,
\beqn
\nonumber
\limsup_{n\to\infty} J_{t,K}^{(4,1)} (n)&\leq &
\limsup_{n\to\infty} E\Bigl( nP\bigl(\|\Pi\|\cdot \|\bQ\|>\veps ta_n\bigr|\Pi)\odin{\|\Pi\|>\rho} \bigr)
\\
\label{ineq1}
&=&
(\veps t)^{-\alpha}E\bigl(\|\Pi\|^\alpha \odin{\|\Pi\|>\rho}\bigr)\to_{\rho\to \infty} 0,\\
\nonumber
\limsup_{n\to\infty} J_{t,K}^{(4,2)} (n)&\leq &
\limsup_{n\to\infty} E\bigl( nP\bigl(\|\Pi\|\cdot \|\bQ\|>\veps ta_n\bigr|\Pi)\odin{\|\Pi\|\leq \rho}\odin{\|\bY\|>\eta} \bigr)
\\
\label{ineq4}
&\leq &
\rho^\alpha (\veps t)^{-\alpha} P\bigl(\|\bY\|>\eta\bigr)\to_{\eta\to \infty} 0.
\feqn
To show the asymptotic of $J_{t,K}^{(4,3)}(n)$ as $n$ goes to infinity write,
\beqn
\nonumber
J_{t,K}^{(4,3)}(n)&\leq&
nP\bigl(\eta+\|\Pi\bQ\|>ta_n,\overline{\Pi\bQ}\in K^\delta\bigr)
\\
\label{normeq1}
&+&
nP\bigl(\overline{\bY+\Pi\bQ}-\overline{\Pi\bQ}\|>\delta, \|\Pi\bQ\|\geq \veps ta_n,\|\bY\|\leq \eta\bigr).
\feqn
Applying the multivariate Breiman's lemma (see for instance \cite[Proposition~5.1]{lemma}) to the first term in the right-hand side of
the last inequality and \eqref{normeq} to the second, we obtain
$\limsup_{n\to\infty} J_{t,K}^{(4,3)}(n) \leq t^{-\alpha} E\bigl(\ms_\bQ \circ \Pi^{-1}(K)\bigr).$
Thus \eqref{compact} is implied by \eqref{jeyf1}-\eqref{normeq1}.
\par
It remains to show that \eqref{open} holds for any open set $G \subset S^{d-1}.$
According to \eqref{jeyf2}, $\limsup_{n\to\infty} J_{t,G}^{(2)}(n) \leq
\limsup_{n\to\infty} J_{t,\ol G}^{(2)}(n) =0.$
Let $G_k\subset S^{d-1},$ $k\in\nn$ be open sets such that $G_k\subset \ol G_k \subset G_{k+1} \subset G$ Let $\gamma_k=\frac{1}{2}\inf\{\|\bx-\by\|:\bx\in G_k,\by\in G^c\}.$ Then, $J_{t,G}^{(1)}(n)
\geq nP\bigl(\|\bY\|>t(1+\veps)a_n,\overline{\bY}\in G_k\bigr)-
nP\bigl(\|\bY\|>t(1+\veps)a_n,\|\overline{\bY}-\overline{\bY+\Pi\bQ}\|>\gamma_k \bigr).$
By \eqref{normeq}, $\liminf\limits_{n\to\infty} J_{t,G}^{(1)}(n) \geq \lim_{\veps\to 0}\liminf_{n\to\infty}
\,nP\bigl(\|\bY\|>t(1+\veps)a_n,\overline{\bY}\in G_k \bigr)=t^{-\alpha}\ms_\bY(G_k).$
Letting $k\to\infty$ we obtain $\liminf\limits_{n\to\infty} J_{t,G}^{(1)}(n) \geq t^{-\alpha} \ms_\bY(G).$
To conclude, observe that
\beq
J_{t,G}^{(4,3)}(n)
&\geq &nP\bigl(\|\Pi\bQ\|-\eta>ta_n,\overline{\Pi\bQ}\in G_k\bigr)-
nP\bigl(\rho\|\bQ\|-\eta>ta_n; \|\bQ\|\leq \eta \bigr)
\\
&-&
nP\bigl(\|\overline{\bY+\Pi\bQ}-\overline{\Pi\bQ}\|>\gamma_k, \|\Pi\bQ\|\geq \veps ta_n,\|\bY\|\leq \eta\bigr).
\feq
By \eqref{normeq},
$\liminf\limits_{n\to\infty} J_{t,G}^{(4,3)}(n) \geq t^{-\alpha} E\bigl(\ms_\bQ \circ \Pi^{-1}(G_k)\bigr).$
Letting $k\to\infty$ establishes \eqref{open}.
\section*{Acknowledgements}
We are very grateful to Krishna Athreya for the careful
reading of a preliminary draft of this paper and many helpful remarks and suggestions.
We would like to thank the anonymous Referee and the Associated Editor for helping us to significantly
improve the presentation of this paper.

\end{document}